\newtheorem{thm}{Theorem}[section]
\newtheorem{lem}[thm]{Lemma}
\newtheorem{conj}[thm]{Conjecture}
\newcommand{\tr}{\text{Tr}}
\journal{Linear Algebra Appl.}
\begin{document}
	
	\begin{frontmatter}
		
		\title{A new proof of Lee's conjecture on the Frobenius norm via the matrix Cauchy-Schwarz inequality}
		
		 \author{Teng Zhang\fnref{1footnote}}
		 
		\address{School of Mathematics and Statistics, Xi'an Jiaotong University, Xi'an 710049, China}
		\fntext[1footnote]{Email: teng.zhang@stu.xjtu.edu.cn}
		\begin{abstract}
			     In 2010, Eun-Young Lee conjectured that if $A,B$ are two $n\times n$ complex matrices and $\left|A\right|, \left|B\right|$ are the absolute values of $A, B$, respectively, then
			     \[
			     \|A+B\|_F\le \sqrt{\dfrac{1+\sqrt{2}}{2}}\|\left|A\right|+\left|B\right|\|_F,
			     \]
			     where $\|\cdot\|_F$ is the Frobenius norm of matrices. This conjecture has been  proven by Lin and Zhang [J. Math. Anal. Appl. 516 (2022) 126542] by studying inequalities for the angle between two matrices induced by the Frobenius inner product. In this paper, we present a new proof of the same result, relying solely on the Cauchy-Schwarz inequality.
		\end{abstract} 
		\begin{keyword}     Absolute value, Frobenius norm, Cauchy-Schwarz inequality\\
			\emph{MSC(2020):} 15A60, 47A30 
		\end{keyword}
	\end{frontmatter}
	\section{Introduction}
Let $M_n$ denote the algebra of all $n\times n$ complex matrices. For $A\in M_n$, the trace of 
$A$, denoted as $\tr\, A$, is defined as the sum of its main diagonal entries. Furthermore, the trace of 
$A$ coincides with the sum of its eigenvalues (counted with algebraic multiplicity) over an algebraically closed field. The operator absolute value of $\left|A\right|$ is defined as $\left|A\right|:=\left(A^*A \right)^\frac{1}{2} $, where $A^*$ represents the conjugate transpose of $A$.   The singular values of $A$, ordered as $s_1(A)\ge \ldots\ge s_n(A)$, characterize the spectral structure of $A$. The Frobenius norm (also known as the Schatten 2-norm) $\|A\|_F$ is given by:
\[
\|A\|_F=\left( \tr \,   \left|A\right|^2\right)^\frac{1}{2}=\left( \sum_{j=1}^ns_j^2(A)\right)^\frac{1}{2}. 
\]
More generally, for any $p\ge 1$, the Schatten $p$-norm $\|\cdot\|_p$ of $A$ is defined as
\[
\|A\|_p=\left( \tr \,  \left|A\right|^p\right)^\frac{1}{p} =\left( \sum_{j=1}^ns_j^p(A)\right)^\frac{1}{p}.
\]

Eun-Young Lee \cite{Lee10}  inquired about the optimal value $c_p$ satisfying
\[
\|A+B\|_p\le c_p\|\left|A\right|+\left|B\right|\|_p
\]
holds for any $A,B\in M_n$. She \cite[p. 584]{Lee10} pointed out even $c_2$ seems difficult to compute and conjectured $c_2=\sqrt{\tfrac{1+\sqrt{2}}{2}}$.
\begin{conj}\label{thm}
	Let $A,B\in M_n$. Then
	\begin{eqnarray}\label{e1}
		   \|A+B\|_F\le \sqrt{\dfrac{1+\sqrt{2}}{2}}\|\left|A\right|+\left|B\right|\|_F.
	\end{eqnarray}
\end{conj}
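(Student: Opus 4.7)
The plan is to reformulate \eqref{e1} by expanding both Frobenius norms. Since
\begin{align*}
\|A+B\|_F^2 &= \|A\|_F^2 + \|B\|_F^2 + 2\,\mathrm{Re}\,\tr(A^*B),\\
\||A|+|B|\|_F^2 &= \|A\|_F^2 + \|B\|_F^2 + 2\tr(|A||B|),
\end{align*}
the conjecture is equivalent to the bound
$$\mathrm{Re}\,\tr(A^*B) \le \frac{\sqrt{2}-1}{4}\bigl(\|A\|_F^2 + \|B\|_F^2\bigr) + \frac{1+\sqrt{2}}{2}\tr(|A||B|),$$
which I will target directly.

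The central step is to establish, via two applications of the Cauchy--Schwarz inequality for the Frobenius inner product, the trace bound
$$|\tr(A^*B)|^2 \le \tr(|A||B|)\cdot\|A\|_F\|B\|_F.$$
By continuity I may assume $A,B$ are invertible, with polar decompositions $A = U|A|$ and $B = V|B|$ for unitary $U,V$; thus $\tr(A^*B) = \tr(|A|U^*V|B|)$. Using cyclic invariance of the trace, this equals the Frobenius inner product $\bigl\langle |A|^{1/2}|B|^{1/2},\; |A|^{1/2}U^*V|B|^{1/2}\bigr\rangle_F$. A first Cauchy--Schwarz gives
$$|\tr(A^*B)|^2 \le \tr(|A||B|)\cdot\tr(V^*U|A|U^*V|B|),$$
and since $V^*U|A|U^*V$ is a unitary conjugate of $|A|$ and hence has Frobenius norm $\|A\|_F$, a second Cauchy--Schwarz bounds the remaining trace by $\|A\|_F\|B\|_F$.

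To close the argument I will use the elementary identity
$$(1+\sqrt{2})\bigl(u - (\sqrt{2}-1)v\bigr)^2 \ge 0,$$
which, using $(1+\sqrt{2})(\sqrt{2}-1) = 1$, rearranges to $2uv \le (\sqrt{2}-1)v^2 + (1+\sqrt{2})u^2$. Applying this with $u = \sqrt{\tr(|A||B|)}$ and $v = \sqrt{\|A\|_F\|B\|_F}$, and then combining with the scalar AM--GM inequality $2\|A\|_F\|B\|_F \le \|A\|_F^2 + \|B\|_F^2$ together with the trivial $\mathrm{Re}\,\tr(A^*B) \le |\tr(A^*B)|$, will yield the reformulated inequality above and thereby \eqref{e1}.

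The main obstacle is identifying the correct Cauchy--Schwarz splitting: the naive estimate $|\tr(A^*B)| \le \|A\|_F\|B\|_F$ ignores $\tr(|A||B|)$ altogether, and it is the symmetric splitting through $|A|^{1/2}|B|^{1/2}$ that brings this quantity into play. Once the trace bound is in hand, the remaining algebraic step is essentially forced; in fact each of the three inequalities used is simultaneously tight on the rank-one extremal pair identified by Lin and Zhang (with $\cos\alpha = \sqrt{2}-1$), so the sharp constant $(1+\sqrt{2})/2$ is recovered exactly.
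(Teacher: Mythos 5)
Your proof is correct and follows essentially the same route as the paper: after polar decomposition you apply the Cauchy--Schwarz inequality twice to the same symmetric splitting $\tr\, (A^*B)=\tr\,\bigl((|B|^{1/2}|A|^{1/2})(|A|^{1/2}U^*V|B|^{1/2})\bigr)$ and then optimize a weighted AM--GM at the weight $\sqrt{2}-1$, exactly as in the paper's Lemma~\ref{lem2} and its application. The only cosmetic difference is that you package the two Cauchy--Schwarz steps into the parameter-free bound $|\tr\,(A^*B)|^2\le \tr\,(|A||B|)\,\|A\|_F\|B\|_F$, which is marginally sharper than the paper's intermediate estimate (where $\|A\|_F\|B\|_F$ is replaced by $\tfrac{1}{2}(\|A\|_F^2+\|B\|_F^2)$ and unitarity is relaxed to contractivity), before relaxing back with scalar AM--GM at the end.
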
 By proving an inequality for the angle between two matrices induced by the Frobenius inner product, Lin and Zhang \cite{LZ22} established the validity of Conjecture \ref{thm}, subsequently demonstrating the optimality of inequality (\ref{e1}) through the canonical matrix pair: \[
A=\begin{bmatrix}
	1&0\\
	0&0
\end{bmatrix}, B=\begin{bmatrix}
1&0\\
0&0
\end{bmatrix}\begin{bmatrix}
\cos\alpha&-\sin\alpha\\
\sin\alpha&\cos\alpha
\end{bmatrix} \text{ with } \cos\alpha=\sqrt{2}-1.
\]

This article provides an elementary proof of Conjecture \ref{thm}, utilizing solely the matrix form of the Cauchy-Schwarz inequality.
\section{Main results}
The following two trace inequalities play an important role in our argument.
\begin{lem}\label{lem1}
	Let $S, T\in M_n$. Then for any $t\in (0,\infty)$,
	\[
	2\left|\tr \,   S^*T\right|\le  t\tr \,   S^*S+\frac{1}{t}\tr \,   T^*T. 
	\]
\end{lem}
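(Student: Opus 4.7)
My plan is to derive the stated inequality as a weighted form of the Cauchy–Schwarz inequality for the Frobenius inner product $\langle S,T\rangle_F := \tr\, S^{*}T$.

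First, I would reduce to the case where $\tr\, S^{*}T$ is a nonnegative real number. Writing $\tr\, S^{*}T = |\tr\, S^{*}T|\,e^{i\theta}$, I replace $T$ by $e^{-i\theta}T$; this leaves $\tr\, T^{*}T$ unchanged (absolute values kill the phase) and turns $\tr\, S^{*}T$ into $|\tr\, S^{*}T|$. So it suffices to prove
\[
2\,\tr\, S^{*}T \;\le\; t\,\tr\, S^{*}S + \tfrac{1}{t}\,\tr\, T^{*}T
\]
under the assumption that $\tr\, S^{*}T \ge 0$.

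The key identity is then the expansion of a manifestly nonnegative Frobenius norm square. For any $t>0$ set $X := \sqrt{t}\,S - \tfrac{1}{\sqrt{t}}\,T$. Then
\[
0 \;\le\; \tr\, X^{*}X \;=\; t\,\tr\, S^{*}S - \tr\, S^{*}T - \tr\, T^{*}S + \tfrac{1}{t}\,\tr\, T^{*}T,
\]
and since $\tr\, T^{*}S = \overline{\tr\, S^{*}T}$, the middle terms sum to $2\,\Re(\tr\, S^{*}T) = 2\,\tr\, S^{*}T$ by the reduction above. Rearranging gives the desired inequality.

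There is no serious obstacle here: the statement is essentially the inequality $2ab \le t a^{2} + t^{-1}b^{2}$ of AM–GM type, lifted to the matrix setting by using the Hilbert space structure on $M_n$ induced by $\langle\cdot,\cdot\rangle_F$. The only small point requiring care is the phase normalization that converts $|\tr\, S^{*}T|$ into $\tr\, S^{*}T$ itself; everything else is a direct expansion of $\|\sqrt{t}\,S-\tfrac{1}{\sqrt{t}}\,T\|_F^{2}\ge 0$.
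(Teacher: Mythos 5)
Your proof is correct. It differs from the paper's route in a small but genuine way: the paper quotes the matrix Cauchy--Schwarz inequality $\left|\tr\, S^*T\right|\le \left( \tr\, S^*S\right)^{1/2}\left( \tr\, T^*T\right)^{1/2}$ as a known result and then applies the scalar AM--GM inequality $2ab\le ta^2+t^{-1}b^2$, whereas you prove the weighted inequality in one stroke by expanding $\tr\, X^*X\ge 0$ for $X=\sqrt{t}\,S-t^{-1/2}T$ after a phase rotation $T\mapsto e^{-i\theta}T$. Your argument is fully self-contained (it is, in effect, the standard proof of Cauchy--Schwarz folded together with the AM--GM step), and the phase normalization is handled correctly: it leaves $\tr\, T^*T$ invariant and converts $\tr\, S^*T$ into its modulus, so the cross terms in the expansion become $2\left|\tr\, S^*T\right|$ as needed. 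The paper's version is more modular, which fits its stated theme of deriving everything from the Cauchy--Schwarz inequality; yours buys independence from any cited lemma at the cost of one extra normalization step. Either is acceptable.
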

\begin{proof}
	By the matrix-form Cauchy-Schwarz inequality \cite[p. 95]{Bha97}, we have
	\begin{eqnarray}\label{e2.1}
		2\left|\tr \,   S^*T\right|\le 2\left( \tr \,   S^*S\right)^\frac{1}{2}\left( \tr \,   T^*T\right)^\frac{1}{2}.
	\end{eqnarray}
	 By the scalar Arithmetic Mean-Geometric Mean (AM-GM) inequality, we have for any  $t\in (0,\infty)$,
	 \begin{eqnarray}\label{e2.2}
	 	2\left( \tr \,   S^*S\right)^\frac{1}{2}\left( \tr \,   T^*T\right)^\frac{1}{2}\le t\tr \,   S^*S+\frac{1}{t}\tr \,   T^*T.
	 \end{eqnarray}
	 Combing (\ref{e2.1}) and (\ref{e2.2}) gives Lemma \ref{lem1}.
\end{proof}
\begin{lem}\label{lem2}
	Let $X,Y$ be two $n\times n$ positive semidefinite matrices. Assume $Q$ is a contraction of order $n$, i.e., $\|Q\|_\infty\le 1$. Then for any $t\in (0,\infty)$,
	\[
	4\left|\tr \,   QXY\right|\le t\tr \,  (X^2+Y^2)+\dfrac{1}{t}\tr \,  (XY+YX).
	\]
\end{lem}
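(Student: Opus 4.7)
The plan is to reduce Lemma~\ref{lem2} to Lemma~\ref{lem1} by splitting $\tr\,QXY$ symmetrically in $X$ and $Y$ so that the Cauchy--Schwarz cross term automatically produces a $\tr(XY)$ contribution, and then disposing of the leftover piece with a second application of matrix Cauchy--Schwarz together with the contraction hypothesis on $Q$.

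First I would use cyclicity of the trace to rewrite
\[
\tr\,QXY = \tr\,(Y^{1/2}QX^{1/2})(X^{1/2}Y^{1/2}),
\]
set $S^* := Y^{1/2}QX^{1/2}$ and $T := X^{1/2}Y^{1/2}$, and compute $\tr\,S^*S = \tr(QXQ^*Y)$ together with $\tr\,T^*T = \tr(XY)$. Lemma~\ref{lem1} then yields
\[
2\left|\tr\,QXY\right| \le t\,\tr(QXQ^*Y) + \frac{1}{t}\,\tr(XY).
\]
Since $2\tr(XY) = \tr(XY+YX)$, the second summand already matches the target form; it remains to prove the estimate $\tr(QXQ^*Y) \le \frac{1}{2}\tr(X^2+Y^2)$, a bound that makes sense because $\tr(QXQ^*Y)$ is a nonnegative real (as $QXQ^*\ge 0$ and $Y\ge 0$).

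For this remaining estimate I would apply the matrix Cauchy--Schwarz inequality~(\ref{e2.1}) to the splitting $\tr\,(QX)(Q^*Y)$, obtaining $\tr(QXQ^*Y) \le (\tr\,Q^*QX^2)^{1/2}(\tr\,QQ^*Y^2)^{1/2}$. The contractivity hypotheses $Q^*Q\le I$ and $QQ^*\le I$ then give $\tr(Q^*QX^2)\le\tr(X^2)$ and $\tr(QQ^*Y^2)\le\tr(Y^2)$, after which scalar AM--GM completes the bound. Substituting back and multiplying by two yields Lemma~\ref{lem2} with the parameter $t$ inherited from Lemma~\ref{lem1}.

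The delicate point is the opening decomposition: a naive choice such as $S^* = QX$, $T = Y$ produces only $\tr(X^2)$ and $\tr(Y^2)$ terms after applying $Q^*Q\le I$, never exposing a $\tr(XY)$ contribution, so the resulting estimate is strictly weaker than what Lemma~\ref{lem2} asserts. Distributing $X^{1/2}$ and $Y^{1/2}$ symmetrically across $Q$ is exactly the move that forces the Jordan-product term $\tr(XY+YX)$ to appear, and it is the step without which the sharp constant $\sqrt{(1+\sqrt{2})/2}$ of Conjecture~\ref{thm} cannot be recovered downstream.
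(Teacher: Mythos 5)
Your proposal is correct and follows essentially the same route as the paper: the identical cyclic rewriting $\tr\,QXY=\tr\,(Y^{1/2}QX^{1/2})(X^{1/2}Y^{1/2})$, the same application of Lemma~\ref{lem1} producing the terms $\tr(QXQ^*Y)$ and $\tr(XY)$, and the same second Cauchy--Schwarz step on the splitting $\tr\,(QX)(Q^*Y)$ combined with $Q^*Q\le I$, $QQ^*\le I$. The only cosmetic difference is the order of operations in the last stage (you contract before applying AM--GM, while the paper invokes Lemma~\ref{lem1} with parameter $1$ and then contracts), which yields the identical bound.
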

\begin{proof}
	\begin{eqnarray*}
		4\left|\tr \,   QXY\right|&=& 4\left|\tr \,   (Y^\frac{1}{2}QX^\frac{1}{2})(X^\frac{1}{2}Y^\frac{1}{2})\right|\\
		&\le& 2t\tr \,   YQXQ^*+\dfrac{2}{t}\tr \,   XY \ (\text{By Lemma \ref{lem1}})\\
		&\le& t\left( \tr \,   Q^*Y^2Q+\tr \,   QX^2Q^*\right) +\dfrac{1}{t}\tr \,  (XY+YX)\\
		&&   (\text{Take the parameter in Lemma \ref{lem1} as $1$})\\
		&\le& t\tr \,  (X^2+Y^2)+\dfrac{1}{t}\tr \,  (XY+YX) \ (\text{Since $Q$ is a contraction}).
	\end{eqnarray*}
	\end{proof}
\noindent\emph{Proof of Conjecture \ref{thm}.}	Let $A=U\left|A\right|$ and $B=V\left|B\right|$ be Polar decompositions of $A, B$, respectively. Denote $W=U^*V$. Then we have for any $t\in (0, \infty)$,
	\begin{eqnarray*}
		2\|A+B\|_F^2&=&  2 \tr \,  \left(\left|A\right|^2+\left|B\right|^2\right) +4\Re \left( \tr \,    W\left|B\right|\left|A\right|\right) \\
		&\le& 2 \tr \,  \left(\left|A\right|^2+\left|B\right|^2\right) + 4\left|\tr \,    W\left|B\right|\left|A\right|\right| \\
	    &\le& (2+t) \tr \,  \left(\left|A\right|^2+\left|B\right|^2\right)+\dfrac{1}{t}\tr \,  (\left|A\right|\left|B\right|+\left|B\right|\left|A\right|)\\ &&(\text{Set $Q=W, X=\left|B\right|, Y=\left|A\right|$ in Lemma \ref{lem2}}).
	\end{eqnarray*}
	Taking $t=\sqrt{2}-1$ in the above inequality yields 
	\[
	2\|A+B\|_F^2\le (\sqrt{2}+1)\left\|\left|A\right|+\left|B\right|\right\|_F^2,
	\]
	which is equivalent to (\ref{e1}).
\qed
	\section*{Acknowledgments}  
The author is grateful to Professor M. Lin, his academic supervisor, for bringing this problem to his attention.

\end{document}